\newtheorem{theorem}{Theorem}
\theoremstyle{definition}
\theoremstyle{remark}
\newtheorem*{remark}{Remark}
\begin{document}

\begin{titlepage}
\begin{center}
    \vspace{0.5cm}
   
    \LARGE\textbf{On the correspondence between symmetries of two-dimensional autonomous dynamical systems and their phase plane realisations}
       
    \normalsize
        
    \vspace{1.0cm}
    \setcounter{footnote}{0}
    \setlength{\footnotemargin}{0.8em}
    {\large Fredrik Ohlsson\footnote{Corresponding author. E-mail: fredrik.ohlsson@umu.se}\footnote{\label{Umeå}Department of Mathematics and Mathematical Statistics, Umeå University, Sweden}, Johannes G. Borgqvist\footnote{\label{Oxford}Wolfson Centre for Mathematical Biology, Mathematical Institute, University of Oxford, United Kingdom}, and Ruth E. Baker\footnotemark[\value{footnote}]}
    
    \setlength{\footnotemargin}{1.8em}
        
    \vspace{1cm}
        
    \begin{abstract}
    We consider the relationship between symmetries of two-dimensional autonomous dynamical system in two common formulations; as a set of differential equations for the derivative of each state with respect to time, and a single differential equation in the phase plane representing the dynamics restricted to the state space of the system.
    
    Both representations can be analysed with respect to the symmetries of their governing differential equations, and we establish the correspondence between the set of infinitesimal generators of the respective formulations. Our main result is to show that every generator of a symmetry of the autonomous system induces a well-defined vector field generating a symmetry in the phase plane and, conversely, that every symmetry generator in the phase plane can be lifted to a generator of a symmetry of the original autonomous system, which is unique up to constant translations in time.
    
    The process of lifting requires the solution of a linear partial differential equation, which we refer to as the lifting condition. We discuss in detail the solution of this equation in general, and exemplify the lift of symmetries in two commonly occurring examples; a mass conserved linear model and a non-linear oscillator model. 
    \end{abstract}
    
    \vspace{0.5cm}
    
    \textbf{Keywords:}\\ dynamical systems, Lie symmetries, phase plane, differential geometry \\      
\end{center}
\end{titlepage}
   
\setcounter{footnote}{0}
\renewcommand{\thefootnote}{\arabic{footnote}}

\numberwithin{equation}{section}

\section{Introduction}
A dynamical system in two states $u$ and $v$ comprises a set of coupled first order ordinary differential equations (ODEs) for the derivatives $\dot{u}$ and $\dot{v}$ with respect to the independent variable, time $t$. Given suitable initial data, the solution of the system amounts to determining the functions $u(t)$ and $v(t)$ that describe the behaviour and evolution of the states with time. We will refer to the native space of the ODE system, parametrised by $(t,u,v)$, as the time domain. If the system is autonomous, meaning that the derivatives are not explicitly dependent on the time $t$, it is possible to eliminate time and express the system as a single equivalent ODE in the phase plane parametrised by $(u,v)$.

A powerful approach to analyse dynamical systems is \textit{linear stability analysis} \cite{arnold1973,glendinning1994}, based on the Hartman--Grobman theorem \cite{grobman1959,hartman1960}, in which the system is linearised around its steady states and the eigenvalues of the Jacobian of the linearised system are used to determine the local dynamics in the vicinity of critical points. In addition, qualitative information concerning global behaviour, e.g., asymptotic behaviour and the existence of limit cycles, can be obtained from the flow of the vector field defined by the phase plane ODE \cite{glendinning1994,strogatz2015}. For linear systems such analysis in fact provides the exact solution, and for some non-linear systems, e.g., the Lotka--Volterra model, the phase space ODE can be solved exactly, although the solution is often implicit. For general non-linear systems, however, the phase plane ODE is typically prohibitively difficult to integrate. 

A complementary approach to extract information about a dynamical system can be found in \textit{symmetry methods}, which are based on Lie group analysis of (continuous) symmetries, i.e., transformations that map solutions of the differential equations to other solutions. Symmetries can be used to find differential invariants, solve the differential equations analytically, reduce the order of models in a systematic fashion and classify models based on their symmetry properties~\cite{bluman2013symmetries,hydon2000symmetry,olver1993applications,stephani1989differential}. In the time domain, symmetries of the system of ODEs can provide information on, e.g., conservation laws of the dynamical system, while in the phase plane, symmetries can be used, e.g., to integrate the ODE or provide additional qualitative information by relating distinct solutions in the phase portrait to each other.

In order to add these approaches to the toolkit for analysis of non-linear dynamical systems, it is of both practical and conceptual interest to investigate the connection between the symmetries of the two-dimensional phase plane and three-dimensional time domain formulations. For example, symmetries of ODEs are typically difficult to obtain analytically\footnote{Even though there are always infinitely many symmetries of first order ODEs, they are often difficult to obtain as closed form solutions to the symmetry conditions.}, and the symmetry analysis may be more amenable in one representation of the dynamical system than the other. Furthermore, all models in the time domain with the same ratio $\dot{v}/\dot{u}$ share a common phase plane description and therefore also, e.g., conservation laws inferred from phase plane symmetries. The connection to symmetries in the time domain then amounts to understanding how such properties are realised in terms of the solutions to the system of ODEs.

In this paper, we present three major theorems that establish the detailed correspondence between symmetries of the time domain and the phase plane. To the best of our knowledge, this is the first time this correspondence has been considered in the literature on symmetry methods for analysis of dynamical systems. Our results offer novel insight into the way phase plane analysis and symmetry analysis are connected and can be combined to understand important aspects of the dynamical properties of non-linear autonomous two-state systems. Throughout the analysis we account for the existence of two equivalent phase plane representations, corresponding to the ambiguity in which state, $u$ or $v$, to treat as the independent variable in the reduction. Firstly, we show that symmetries in the time domain induce symmetries in the phase plane. Secondly, we demonstrate that there is no obstruction to lifting symmetries in the phase plane to symmetries in the time domain, and formulate what we refer to as the \textit{lifting condition}; a linear partial differential equation (PDE) describing how the transformation must act on time $t$ to be compatible with the action on the states $(u,v)$. By solving the lifting condition, any symmetry of the single phase plane ODE can be extended to a symmetry of a corresponding two-state system of ODEs in the time domain, which, in turn, implies that symmetry-based analysis in the phase plane can be understood in terms of temporal dynamics. Finally, we illustrate the lifting of phase plane symmetries to the time domain using two concrete autonomous examples; a linear and mass conserved system and a non-linear oscillator system.
\section{Geometric framework for symmetries}
\subsection{Symmetries in the time domain}
We consider an autonomous system of ODEs in the states $(u,v)$ given by
\begin{equation}
\label{eqn:system_ODE}
    \dot{u} = \frac{\dd u}{\dd t} = \omega_u(u,v) \,, \quad \dot{v} = \frac{\dd v}{\dd t} = \omega_v(u,v)\,.
\end{equation}
The independent and dependent variables $(t,u,v)$ parametrise the total space $M_3$ and the corresponding five-dimensional jet space $J_5$ is parameterised by the coordinate $x^{\mu}=(t,u,v,\dot{u},\dot{v})$. The subvariety in $J_5$ where Eq.~\eqref{eqn:system_ODE} is satisfied is denoted $\Delta$, and every solution $(t,u(t),v(t))$ to Eq.~\eqref{eqn:system_ODE} corresponds to a curve in $\Delta$.

We consider a 1-parameter family of Lie point transformations $\Gamma_3 : M_3 \to M_3$ generated by the vector field
\begin{equation}
\label{eqn:vector_M3}
    X = \xi(t,u,v)\partial_t + \eta_u(t,u,v)\partial_u + \eta_v(t,u,v)\partial_v \,.
\end{equation}
The transformations induce an action on $J_5$ through the prolonged generator
\begin{equation}
\label{eqn:prolonged_vector_J5}
    X^{(1)} = X + \eta_u^{(1)}(t,u,v)\partial_{\dot{u}} + \eta_v^{(1)}(t,u,v)\partial_{\dot{v}} \,,
\end{equation}
where
\begin{equation}
\label{eqn:prolonged_tangents_J5}
    \eta_u^{(1)} = D_t\eta_u - \dot{u}D_t\xi \,, \quad \eta_v^{(1)} = D_t\eta_v - \dot{v}D_t\xi \,,
\end{equation}
and
\begin{equation}
    D_t = \partial_t + \dot{u}\partial_u + \dot{v}\partial_v \,,
\end{equation}
is the total derivative in the time domain.

In order to preserve the autonomy of Eq.~\eqref{eqn:system_ODE}, which is required for the phase plane formulation described below, we require that the tangents in the dependent variables are independent of time
\begin{equation}
\label{eqn:autonomy_condition}
    \partial_t \eta_u = 0 \,, \quad \partial_t \eta_v = 0 \,.
\end{equation}

The transformation $\Gamma_3$ is a symmetry of the system in Eq.~\eqref{eqn:system_ODE}, in the sense that it preserves the space of solutions, if the generator $X^{(1)}$ satisfies the infinitesimal symmetry condition
\begin{equation}
\label{eqn:sym_con_ODE}
    \left. X^{(1)}\left( \dot{u} - \omega_u \right) \right|_{\Delta} = 0 \,, \quad \left. X^{(1)}\left( \dot{v} - \omega_v \right) \right|_{\Delta} = 0\,.
\end{equation}

\subsection{Symmetries in the phase plane}
Because the system in Eq.~\eqref{eqn:system_ODE} is autonomous the dynamics of the states can be described by reformulating Eq.~\eqref{eqn:system_ODE} as a single ODE in the $(u,v)$ phase plane
\begin{equation}
\label{eqn:system_phase}
    v' = \frac{\dd v}{\dd u} = \frac{\dot{v}}{\dot{u}} = \frac{\omega_v(u,v)}{\omega_u(u,v)} = \Omega(u,v) \,,
\end{equation}
provided that $\omega_u(u,v) \neq 0$. The independent variable $u$ and the dependent variable $v$ now parameterise a two-dimensional total space $M_2$ with the corresponding three-dimensional jet space $J_3$ being parameterised by $y^{\alpha}=(u,v,v')$. In analogy with the situation in the time domain, the subvariety in $J_3$ where Eq.~\eqref{eqn:system_phase} is satisfied is denoted $\delta$ and a solution $(u,v(u))$ is a curve in $\delta$.

A 1-parameter family of Lie point transformations $\Gamma_2 : M_2 \to M_2$ is generated by the vector field
\begin{equation}
\label{eqn:vector_M2}
    Y = \zeta_u(u,v)\partial_u + \zeta_v(u,v)\partial_v \,.
\end{equation}
As in the time domain, the transformations $\Gamma_2$ induce an action on $J_3$ through the prolonged generator
\begin{equation}
\label{eqn:prolonged_vector_J3}
    Y^{(1)} = Y + \zeta_v^{(1)}(u,v)\partial_{v'} \,,
\end{equation}
where
\begin{equation}
\label{eqn:prolonged_tangents_J3}
    \zeta_v^{(1)} = D_u\zeta_v - v'D_u\zeta_u \,,
\end{equation}
and
\begin{equation}
    D_u = \partial_u + v'\partial_v\,,
\end{equation}
is the total derivative in phase plane.

Again, in analogy with the time domain, the transformation $\Gamma_2$ is a symmetry of the system in Eq.~\eqref{eqn:system_phase} if the generator $Y^{(1)}$ satisfies the infinitesimal symmetry condition
\begin{equation}
\label{eqn:sym_con_phase}
    \left. Y^{(1)}\left( v' - \Omega \right) \right|_{\delta} = 0 \,.
\end{equation}

The equivalent formulation obtained by treating $v$ as the independent variable in the phase plane is given by
\begin{equation}
\label{eqn:system_phase_equivalent}
    u' = \frac{\dd u}{\dd v} = \frac{\dot{u}}{\dot{v}} = \frac{\omega_u(u,v)}{\omega_v(u,v)} \,,
\end{equation}
where we require $\omega_v \neq 0$, and symmetries are described by interchanging $u$ and $v$ in the constructions above. A straightforward calculation shows that the infinitesimal symmetry conditions for the two possible phase plane formulations are equivalent and the symmetries in the phase plane consequently unaffected by the choice of independent variable in the parameterisation whenever both formulations are well-defined. Geometrically, this is consistent since the equivalent phase plane representations in Eqs.~\eqref{eqn:system_phase} and \eqref{eqn:system_phase_equivalent} by definition have the same solution set, and symmetries are point transformations which preserve this set. In what follows we can therefore restrict the treatment to the phase plane description given in Eq.~\eqref{eqn:system_phase}.
\section{Reduction to the phase plane}
We now examine the reduction from the time domain to the phase plane more closely from a geometric point of view. In particular, we describe how symmetries of the two formulations are related. To this end, we consider the map $f : J_5 \to J_3$ defined by
\begin{equation}
\label{eqn:reduction_map}
    f : (t,u,v,\dot{u},\dot{v}) \mapsto \left(u,v,\frac{\dot{v}}{\dot{u}}\right) \,,
\end{equation}
or in terms of coordinates on $J_3$ $y^{\alpha}(x^{\mu}) = \left( u,v,\frac{\dot{v}}{\dot{u}} \right)$. Clearly, $f$ also maps Eq.~\eqref{eqn:system_ODE} to Eq.~\eqref{eqn:system_phase}, acts on solutions according to $f(\Delta) \subset \delta$, and restricts in a straightforward way to a map $f : M_3 \to M_2$, where we allow ourselves a slight abuse of notation by using $f$ to refer to both maps.

The push-forward of a vector field on $M_3$ by $f : M_3 \to M_2$ yields a vector field on $M_2$, but in general, since $f:J_5 \to J_3$ is not injective, the push-forward of a vector field on $J_5$ does not produce a well-defined vector field on $J_3$. However, the relation between the components of a prolonged vector field on $J_5$ implies that it is indeed pushed forward to a well-defined vector field on $J_3$. Moreover, the push-forward commutes with the prolongation.

\begin{theorem}
\label{thm:push_forward_prolongation}
The push-forward $f_*\left( X^{(1)} \right)$ of the prolonged vector field $X^{(1)}$ in Eq.~\eqref{eqn:prolonged_vector_J5} by $f: J_5 \to J_3$ in Eq.~\eqref{eqn:reduction_map} is a vector field on $J_3$, which coincides with the prolongation $\left( f_*X \right)^{(1)}$ of the push-forward of the vector field $X$ in Eq.~\eqref{eqn:vector_M3} by $f: M_3 \to M_2$.
\end{theorem}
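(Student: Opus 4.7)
My plan is to verify the identity coordinate-wise by applying the push-forward $f_*$ directly to $X^{(1)}$ and comparing with the explicit prolongation of the push-forward $Y = f_* X$. The computation splits naturally into two parts: first, I would compute $f_* X$ on $M_2$ and its prolongation to $J_3$; then I would push forward the $\partial_{\dot{u}}$ and $\partial_{\dot{v}}$ components of $X^{(1)}$ and verify that the result depends only on the coordinates $(u,v,v')$ of $J_3$ and matches the $\partial_{v'}$ component obtained in the first part.

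For the first part, since $f : M_3 \to M_2$ simply forgets $t$, the differential $df$ annihilates $\partial_t$, so $f_* X = \eta_u \partial_u + \eta_v \partial_v$. The autonomy condition in Eq.~\eqref{eqn:autonomy_condition} ensures that $\eta_u$ and $\eta_v$ are functions of $(u,v)$ alone, so this is well-defined on $M_2$. Identifying it with $Y$ via $\zeta_u = \eta_u$ and $\zeta_v = \eta_v$, the standard prolongation formula in Eq.~\eqref{eqn:prolonged_tangents_J3} then gives $(f_* X)^{(1)} = Y + (D_u \eta_v - v' D_u \eta_u)\partial_{v'}$.

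For the second part, I would evaluate $X^{(1)}$ on the pullbacks of the coordinate functions $u$, $v$, and $v' = \dot{v}/\dot{u}$ on $J_3$. The first two components return $\eta_u$ and $\eta_v$ immediately, while the third reduces to
\[
X^{(1)}\!\left(\frac{\dot{v}}{\dot{u}}\right) = \frac{\eta_v^{(1)} - v'\,\eta_u^{(1)}}{\dot{u}}.
\]
Substituting the expressions for $\eta_u^{(1)}$ and $\eta_v^{(1)}$ from Eq.~\eqref{eqn:prolonged_tangents_J5}, the contributions proportional to $D_t \xi$ collect as $(v'\dot{u} - \dot{v})D_t \xi$ and vanish identically because $v' = \dot{v}/\dot{u}$. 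Using the autonomy condition to drop the $\partial_t$ terms in $D_t \eta_u$ and $D_t \eta_v$, and substituting $\dot{v} = v'\dot{u}$ once more, a common factor of $\dot{u}$ can be extracted which cancels against the denominator, leaving exactly $D_u \eta_v - v' D_u \eta_u = \zeta_v^{(1)}$.

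This final expression depends only on $(u,v,v')$, which simultaneously establishes that $f_* X^{(1)}$ is a well-defined vector field on $J_3$ despite $f$ being non-injective, and that its components agree with those of $(f_* X)^{(1)}$. The main obstacle I anticipate is ensuring both cancellations actually occur: the specific structure of the prolongation tangents is what causes the $\xi$-dependence to drop out (via $\dot{v} = v'\dot{u}$), while the autonomy condition is what removes the explicit dependence on $t$, $\dot{u}$, and $\dot{v}$ beyond the combination $v'$. Either ingredient alone would be insufficient for the push-forward to descend to a well-defined object on $J_3$.
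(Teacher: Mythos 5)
Your proposal is correct and follows essentially the same route as the paper: computing the components of $f_*X^{(1)}$ via the action on the coordinate functions $(u,v,\dot v/\dot u)$ is the same Jacobian calculation the paper performs, the $\xi$-dependence cancels through $\dot v = v'\dot u$ exactly as in the paper, and autonomy then yields $\zeta_v^{(1)} = D_u\eta_v - v'D_u\eta_u$, matching the prolongation of $f_*X$. The only difference is cosmetic ordering of the two halves of the comparison.
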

\begin{proof}
The push-forward $f_*(X^{(1)})$ of the prolonged generator at a point $x \in J_5$ produces a vector with components
\begin{equation}
    f_*(X^{(1)})^{\alpha} = \frac{\partial y^{\alpha}}{\partial x^{\mu}} (X^{(1)})^{\mu}\,,
\end{equation}
at $y = f(x) \in J_3$. Since the non-vanishing elements of the Jacobian of $f$ are
\begin{equation}
    \frac{\partial u}{\partial u} = 1 \,, \quad \frac{\partial v}{\partial v} = 1 \,, \quad \frac{\partial v'}{\partial \dot{u}} = -\frac{\dot{v}}{\dot{u}^2} \,, \quad \frac{\partial v'}{\partial \dot{v}} = \frac{1}{\dot{u}} \,,
\end{equation}
the resulting vector is given by
\begin{equation}
    f_*(X^{(1)}) = \eta_u\partial_u + \eta_v\partial_v + \frac{1}{\dot{u}} \left( -v' \eta_u^{(1)} + \eta_v^{(1)} \right) \partial_{v'} \,.
\end{equation}
Imposing Eq.~\eqref{eqn:prolonged_tangents_J5}, which corresponds to $X^{(1)}$ being a prolonged vector field in $J_5$, produces a well-defined vector field in $J_3$
\begin{equation}
f_*(X^{(1)}) = \eta_u\partial_u + \eta_v\partial_v + \left(\partial_u\eta_v + v' \partial_v\eta_v - v' \partial_u \eta_u - (v')^2\partial_v\eta_u \right) \partial_{v'} \,,
\end{equation}
where we have enforced $\partial_t\eta_u = 0,\,\partial_t\eta_v = 0$ to ensure that the transformation preserves the autonomy of Eq.~\eqref{eqn:system_ODE}, and where we note that all dependence on the temporal tangent $\xi(t,u,v)$ cancels.

The push-forward of the generator $X$, on the other hand, is simply given by the restriction
\begin{equation}
    f_*X = \eta_u\partial_u + \eta_v\partial_v \,,
\end{equation}
and the prolongation in $(u,v)$-phase plane, according to Eq.~\eqref{eqn:prolonged_tangents_J3}, becomes
\begin{equation}
    \left( f_*X \right)^{(1)} = \eta_u\partial_u + \eta_v\partial_v + \left(\partial_u\eta_v + v' \partial_v\eta_v - v' \partial_u \eta_u - (v')^2\partial_v\eta_u \right) \partial_{v'} \,,
\end{equation}
which completes the proof.
\end{proof}

\begin{remark}
A consequence of Theorem \ref{thm:push_forward_prolongation} is that we can relax the notation and write $f_*X^{(1)}$ without ambiguity for the combined action of  push-forward and prolongation on $X$.
\end{remark}

We are now interested in determining whether symmetries in the time domain induce symmetries in phase-space, that is whether the push-forward $f_*X^{(1)}$ of the generator $X$ of a symmetry in $J_5$ generates a symmetry in $J_3$. The following theorem establishes the existence of this connection.

\begin{theorem}
If the vector field $X$ generates a symmetry of the system in Eq.~\eqref{eqn:system_ODE}, the push-forward $f_*X$ generates a symmetry of the corresponding phase space representation in Eq.~\eqref{eqn:system_phase}.
\end{theorem}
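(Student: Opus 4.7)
The plan is to combine Theorem~\ref{thm:push_forward_prolongation} with a direct verification of the infinitesimal symmetry condition in the phase plane. By that theorem we may identify $f_*X^{(1)}$ with the prolongation $(f_*X)^{(1)}$, so it suffices to show that
\begin{equation*}
    \left. f_*X^{(1)}\left( v' - \Omega \right) \right|_{\delta} = 0
\end{equation*}
whenever $X^{(1)}$ satisfies Eq.~\eqref{eqn:sym_con_ODE}.

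First I would unpack the symmetry conditions in the time domain. Using the autonomy conditions $\partial_t \eta_u = \partial_t \eta_v = 0$ and $\partial_t \omega_u = \partial_t \omega_v = 0$, the conditions in Eq.~\eqref{eqn:sym_con_ODE} reduce on $\Delta$ to
\begin{equation*}
    \eta_u^{(1)} = \eta_u\,\partial_u\omega_u + \eta_v\,\partial_v\omega_u \,, \qquad
    \eta_v^{(1)} = \eta_u\,\partial_u\omega_v + \eta_v\,\partial_v\omega_v \,.
\end{equation*}
These are the only consequences of the time-domain symmetry that I will need.

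Next I would use the explicit form of the push-forward established in the proof of Theorem~\ref{thm:push_forward_prolongation}, namely
\begin{equation*}
    f_*X^{(1)} = \eta_u\partial_u + \eta_v\partial_v + \frac{1}{\dot{u}}\bigl(-v'\eta_u^{(1)} + \eta_v^{(1)}\bigr)\partial_{v'} \,,
\end{equation*}
and apply it to $v' - \Omega$. Restricting to $\delta$ (where $v' = \Omega = \omega_v/\omega_u$) and using $f(\Delta) \subset \delta$ with $\dot{u} = \omega_u$, the coefficient of $\partial_{v'}$ becomes
\begin{equation*}
    \frac{1}{\omega_u}\Bigl( -\tfrac{\omega_v}{\omega_u}(\eta_u\partial_u\omega_u + \eta_v\partial_v\omega_u) + (\eta_u\partial_u\omega_v + \eta_v\partial_v\omega_v) \Bigr) \,,
\end{equation*}
after substituting the two expressions for $\eta_u^{(1)}$ and $\eta_v^{(1)}$ obtained above.

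Finally, I would observe that this expression is precisely $\eta_u\,\partial_u\Omega + \eta_v\,\partial_v\Omega$, as follows directly from the quotient rule applied to $\Omega = \omega_v/\omega_u$. Consequently the contribution from $\partial_{v'}$ cancels against $-(\eta_u\partial_u + \eta_v\partial_v)\Omega$ coming from the action of $f_*X$ on $-\Omega$, yielding $f_*X^{(1)}(v' - \Omega)|_\delta = 0$. I do not anticipate a conceptual obstacle; the only care needed is to respect the distinction between $\Delta \subset J_5$ and $\delta \subset J_3$ and to invoke the relation $f(\Delta) \subset \delta$ when transferring the on-shell identities for $\eta_u^{(1)}, \eta_v^{(1)}$ into the phase plane computation.
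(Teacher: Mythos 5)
Your proposal is correct and follows essentially the same route as the paper: both verify the phase-plane infinitesimal symmetry condition in components by combining the two time-domain conditions, with the dependence on the time tangent $\xi$ dropping out. The only difference is bookkeeping --- the paper expands $\eta_u^{(1)},\eta_v^{(1)}$ via the prolongation formulas and shows the $D_t\xi$ terms cancel in the combination $\omega_u\,X^{(1)}(\dot{v}-\omega_v)-\omega_v\,X^{(1)}(\dot{u}-\omega_u)$, whereas you substitute the on-shell values of $\eta_u^{(1)},\eta_v^{(1)}$ directly and recognise the resulting $\partial_{v'}$-coefficient as $(\eta_u\partial_u+\eta_v\partial_v)\Omega$ by the quotient rule, which is the same algebra.
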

\begin{proof}
The vector field $X$ generates a symmetry if it satisfies the infinitesimal symmetry condition Eq.~\eqref{eqn:sym_con_ODE}, which in terms of components yields
\begin{equation}
\label{eqn:sym_con_ODE_u_comp}
    \left. X^{(1)}\left( \dot{u} - \omega_u \right) \right|_{\Delta} = \left( \omega_u\partial_u + \omega_v\partial_v \right) \eta_u - \left( \eta_u\partial_u + \eta_v\partial_v \right) \omega_u - \omega_u \left. \left( D_t\xi \right) \right|_{\Delta} = 0 \,,\!
\end{equation}
and
\begin{equation}
\label{eqn:sym_con_ODE_v_comp}
    \left. X^{(1)}\left( \dot{v} - \omega_v \right) \right|_{\Delta} = \left( \omega_u\partial_u + \omega_v\partial_v \right) \eta_v - \left( \eta_u\partial_u + \eta_v\partial_v \right) \omega_v - \omega_v \left. \left( D_t\xi \right) \right|_{\Delta} = 0 \,,
\end{equation}
where once again we have enforced $\partial_t\eta_u = 0$ and $\partial_t\eta_v = 0$ to preserve autonomy. The corresponding infinitesimal symmetry condition Eq.~\eqref{eqn:sym_con_phase} in phase plane, in terms of components, is given by 
\begin{multline}
\label{eqn:sym_con_phase_comp}
    \left. f_*X^{(1)}\left( v' - \Omega \right) \right|_{\delta} = \frac{1}{\omega_u^2} \left[ \omega_u \left( \vphantom{\frac{1}{\omega_u^2}} \left( \omega_u\partial_u + \omega_v\partial_v \right) \eta_v - \left( \eta_u\partial_u + \eta_v\partial_v \right) \omega_v \right) \right. \\ \left. - \omega_v \left( \vphantom{\frac{1}{\omega_u^2}} \left( \omega_u\partial_u + \omega_v\partial_v \right) \eta_u - \left( \eta_u\partial_u + \eta_v\partial_v \right) \omega_u \right) \right] = 0 \,.
\end{multline}
The vanishing of the quantity $\omega_u \left. X^{(1)}\left( \dot{v} - \omega_v \right) \right|_{\Delta} - \omega_v \left. X^{(1)}\left( \dot{u} - \omega_u \right) \right|_{\Delta}$, where terms containing $D_t \xi$ cancel, follows from Eqs.~\eqref{eqn:sym_con_ODE_u_comp} and \eqref{eqn:sym_con_ODE_v_comp}, and implies Eq.~\eqref{eqn:sym_con_phase_comp} which establishes the theorem. 
\end{proof}

\begin{remark}
Clearly, the reduction to the phase plane is independent of the choice of independent variable $u$ or $v$ in the parameterisation of $(u,v)$-space. In particular, the symmetry generator $X = \xi\partial_t + \eta_u\partial_u + \eta_v\partial_v$ reduces to the same vector field $\eta_u\partial_u + \eta_v\partial_v$ in both formulations. All constructions and proofs in this section are therefore unaffected by interchanging the roles $u$ and $v$, and $X$ consequently induces the same symmetry generator in both equivalent phase plane formulations.
\end{remark}

\section{Lifting to the time domain}
In the previous section we established that a symmetry generator $X$ in the time domain induces a symmetry generator $f_*X$ in phase space. We now investigate the converse situation and consider lifting a generator $Y$ of a symmetry from phase space to the time domain, to determine whether all symmetries of the phase space formulation Eq.~\eqref{eqn:system_phase} can be obtained by reduction from symmetries of Eq.~\eqref{eqn:system_ODE} in the time domain.

\subsection{Lifting the generator}
Lifting the vector field $Y$ in Eq.~\eqref{eqn:vector_M2} amounts to introducing a smooth tangent in the time direction according to
\begin{equation}
\label{eqn:lift_vector_M3}
    \hat{Y} = \xi(t,u,v)\partial_t + Y\,.
\end{equation}
The lift $\hat{Y}$ is not unique, but by construction satisfies $f_*\hat{Y} = Y$ and $f_*\hat{Y}^{(1)} = Y^{(1)}$ according to the results of the previous section. Furthermore, if $Y$ generates a symmetry in $J_3$ it can be lifted to a symmetry generator in $J_5$.

\begin{theorem}
If the vector field $Y$ generates a symmetry of the phase space representation Eq.~\eqref{eqn:system_phase}, the lift $\hat{Y}$ generates a symmetry of the system in Eq.~\eqref{eqn:system_ODE} if and only if it satisfies
\begin{equation}
\label{eqn:lifting_condition}
    \left. \left( D_t \xi \right) \right|_{\Delta} =  \frac{1}{\omega_u} \left( \vphantom{\frac{1}{\omega_u}} \left( \omega_u\partial_u + \omega_v\partial_v \right) \zeta_u - \left( \zeta_u\partial_u + \zeta_v\partial_v \right) \omega_u \right) \,.
\end{equation}
\end{theorem}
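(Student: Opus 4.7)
The plan is to expand the infinitesimal symmetry condition Eq.~\eqref{eqn:sym_con_ODE} for the lifted generator $\hat{Y} = \xi\partial_t + \zeta_u\partial_u + \zeta_v\partial_v$, i.e., to specialise Eqs.~\eqref{eqn:sym_con_ODE_u_comp}--\eqref{eqn:sym_con_ODE_v_comp} by setting $\eta_u = \zeta_u$ and $\eta_v = \zeta_v$, and to show that the two resulting component equations split naturally into (i) the lifting condition Eq.~\eqref{eqn:lifting_condition}, arising from the $u$-component, and (ii) a consistency requirement, arising from the $v$-component, which coincides identically with the phase plane symmetry condition Eq.~\eqref{eqn:sym_con_phase_comp} for $Y$. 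In this way both directions of the biconditional reduce to essentially algebraic manipulations.

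For the forward implication, suppose $\hat{Y}$ generates a symmetry of Eq.~\eqref{eqn:system_ODE}, so that Eq.~\eqref{eqn:sym_con_ODE_u_comp} holds. Since the phase plane formulation Eq.~\eqref{eqn:system_phase} requires $\omega_u \neq 0$, I can divide through by $\omega_u$ and isolate $D_t\xi|_\Delta$, which immediately reproduces Eq.~\eqref{eqn:lifting_condition}.

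For the reverse implication, assume $Y$ generates a phase plane symmetry and that $\xi$ satisfies the lifting condition. Then Eq.~\eqref{eqn:sym_con_ODE_u_comp} is satisfied by construction. To verify the remaining condition Eq.~\eqref{eqn:sym_con_ODE_v_comp}, I would substitute the expression for $D_t\xi|_\Delta$ from the lifting condition, clear denominators by multiplying through by $\omega_u$, and observe that the resulting quantity is, up to an overall factor of $\omega_u$, exactly the numerator appearing in Eq.~\eqref{eqn:sym_con_phase_comp}. This vanishes by the assumed phase plane symmetry of $Y$, which establishes Eq.~\eqref{eqn:sym_con_ODE_v_comp} and completes the proof.

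The main obstacle is conceptual rather than computational: each of the two time-domain symmetry components independently constrains $D_t\xi|_\Delta$, and the compatibility of these two constraints is precisely the phase plane symmetry condition for $Y$. The asymmetry between $u$ and $v$ in the statement, in particular the explicit factor $1/\omega_u$, reflects the choice to parameterise the phase plane by $u$; the analogous derivation using the equivalent formulation Eq.~\eqref{eqn:system_phase_equivalent} yields the corresponding lifting condition with $\omega_u$ and $\zeta_u$ replaced by $\omega_v$ and $\zeta_v$. Once this structural observation is in place, the remaining verification is a short algebraic manipulation closely mirroring the argument in the preceding theorem on the induced phase plane symmetry.
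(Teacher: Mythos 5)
Your proposal is correct and follows essentially the same route as the paper: both expand the time-domain symmetry condition into its $u$- and $v$-components with $\eta_u=\zeta_u$, $\eta_v=\zeta_v$, read the lifting condition off the $u$-component, and use the phase plane symmetry condition as the compatibility statement guaranteeing the $v$-component. The only difference is that by multiplying through by $\omega_u$ (nonzero by assumption) instead of dividing by $\omega_v$, you bypass the paper's separate treatment of the cases $\omega_v = 0$ and $\omega_v \neq 0$, a minor streamlining of the same argument.
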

\begin{proof}
The vector field $Y$ generates a symmetry if it satisfies the infinitesimal symmetry condition Eq.~\eqref{eqn:sym_con_phase} which was shown above is equivalent to 
\begin{multline}
\label{eqn:sym_con_phase_comp_lift}
    \omega_u \left( \left( \omega_u\partial_u + \omega_v\partial_v \right) \zeta_v - \left( \zeta_u\partial_u + \zeta_v\partial_v \right) \omega_v \right) = \omega_v \left( \left( \omega_u\partial_u + \omega_v\partial_v \right) \zeta_u - \left( \zeta_u\partial_u + \zeta_v\partial_v \right) \omega_u \right) \,.
\end{multline}
Similarly, the lift $\hat{Y}$ generates a symmetry in the time domain if it satisfies the infinitesimal symmetry condition, which amounts to
\begin{equation}
\label{eqn:sym_con_ODE_u_comp_lift}
     \omega_u \left. \left( D_t\xi \right) \right|_{\Delta} = \left( \omega_u\partial_u + \omega_v\partial_v \right) \zeta_u - \left( \zeta_u\partial_u + \zeta_v\partial_v \right) \omega_u \,,
\end{equation}
and
\begin{equation}
\label{eqn:sym_con_ODE_v_comp_lift}
    \omega_v \left. \left( D_t\xi \right) \right|_{\Delta} = \left( \omega_u\partial_u + \omega_v\partial_v \right) \zeta_v - \left( \zeta_u\partial_u + \zeta_v\partial_v \right) \omega_v \,.
\end{equation}

In order to establish the theorem, we must show that there is no obstruction to simultaneously satisfying Eq.~\eqref{eqn:sym_con_ODE_u_comp_lift} and Eq.~\eqref{eqn:sym_con_ODE_v_comp_lift}, and determine the condition that $\xi(t,u,v)$ is required to fulfil to make $\hat{Y}$ a symmetry generator.

We consider first the case $\omega_v = 0$. Then Eq.~\eqref{eqn:sym_con_phase_comp_lift} reduces to $\partial_u\zeta_v = 0$ and implies that Eq.~\eqref{eqn:sym_con_ODE_v_comp_lift} is identically satisfied. Consequently, there is no obstruction and Eq.~\eqref{eqn:sym_con_ODE_u_comp_lift} reduces to the lifting condition
\begin{equation}
    \left. \left( D_t \xi \right) \right|_{\Delta} =  \frac{1}{\omega_u} \left( \vphantom{\frac{1}{\omega_u}}  \omega_u\partial_u\zeta_u - \left( \zeta_u\partial_u + \zeta_v\partial_v \right) \omega_u \right) \,.
\end{equation}

In the case $\omega_v \neq 0$, we observe that Eq.~\eqref{eqn:sym_con_phase_comp_lift} immediately implies that the lifting conditions
\begin{equation}
    \left. \left( D_t \xi \right) \right|_{\Delta} =  \frac{1}{\omega_u} \left( \vphantom{\frac{1}{\omega_u}} \left( \omega_u\partial_u + \omega_v\partial_v \right) \zeta_u - \left( \zeta_u\partial_u + \zeta_v\partial_v \right) \omega_u \right) \,,
\end{equation}
and
\begin{equation}
    \left. \left( D_t \xi \right) \right|_{\Delta} =  \frac{1}{\omega_v} \left( \vphantom{\frac{1}{\omega_v}} \left( \omega_u\partial_u + \omega_v\partial_v \right) \zeta_v - \left( \zeta_u\partial_u + \zeta_v\partial_v \right) \omega_v \right) \,,
\end{equation}
produced by Eqs.~\eqref{eqn:sym_con_ODE_u_comp_lift} and \eqref{eqn:sym_con_ODE_v_comp_lift}, respectively, are equivalent and that there is consequently no obstruction.
\end{proof}

\begin{remark}
We note that, in general, the lifted symmetry generator $\hat{Y}$ will not be fibre-preserving since $\xi$ can contain a dependence on the dependent variables $u$ and $v$.
\end{remark}

\subsection{Solving the lifting condition}
The general form of the lifting condition Eq.~\eqref{eqn:lifting_condition} for the system in Eq.~\eqref{eqn:system_ODE} is given by
\begin{equation}
\label{eqn:lifting_condition_general}
    \partial_t \xi + \omega_u\partial_u\xi + \omega_v\partial_v\xi = G(u,v) \,,
\end{equation}
where the right-hand side $G(u,v)$ depends on the states through $\omega_u$, $\omega_v$, $\zeta_u$ and $\zeta_v$, but contains no explicit dependence on $t$ due to autonomy. Equation \eqref{eqn:lifting_condition_general} is a linear PDE for the time tangent $\xi(t,u,v)$ which can, in general, be solved using the method of characteristics.

The corresponding characteristic system is
\begin{equation}
\label{eqn:characteristic_system_lifting}
    \frac{\dd t}{\dd s} = 1 \,, \quad \frac{\dd u}{\dd s} = \omega_u \,, \quad \frac{\dd v}{\dd s} = \omega_v \,, \quad \frac{\dd \xi}{\dd s} = G(u,v) \,,
\end{equation}
where $s$ parameterises the characteristic curves which are simply the solutions to the original system in Eq.~\eqref{eqn:system_ODE}, a result which can also be deduced from the fact that the total derivative $D_t$ is the generator of time evolution for the system. The remaining ODE for $\xi$ restricted to a characteristic curve can then be solved to produce
\begin{equation}
\label{eqn:characteristic_system_solution}
    \xi = \int G(u(s),v(s)) \dd s + F \,,
\end{equation}
where $F$ is constant on each characteristic. As a consequence, $F$ is an arbitrary function of the constants of motion of the system in Eq.~\eqref{eqn:system_ODE} and the effect of a non-vanishing function $F$ is a constant shift in time of the entire solution curve. In other words, the general lifted symmetry generator
\begin{equation}
\label{eqn:lifting_condition_general_solution}
    \hat{Y} = \left[ \left(\int G(u(s),v(s)) \dd s\right)\partial_t + \zeta_u\partial_u + \zeta_v\partial_v \right] + F\partial_t \,,
\end{equation}
can be decomposed into one symmetry transformation depending on the non-homogeneous part of the solution Eq.~\eqref{eqn:characteristic_system_solution} and one translation $F \partial_t$ in time. In fact, time translation is a manifest symmetry corresponding to the autonomy of the system in Eq.~\eqref{eqn:system_ODE} meaning that Eq.~\eqref{eqn:lifting_condition_general_solution} can be interpreted as a linear combination of two different generators of the full symmetry group of the system in Eq.~\eqref{eqn:system_ODE}.

\begin{remark}
Just as for the reduction to the phase plane, the lifting to the time domain is independent of which of the formulations in Eqs.~\eqref{eqn:system_phase} or \eqref{eqn:system_phase_equivalent} is used. In particular, the infinitesimal symmetry condition is the same in the two formulations meaning that the two lifting conditions in Eqs.~\eqref{eqn:sym_con_ODE_u_comp_lift} and \eqref{eqn:sym_con_ODE_v_comp_lift} are equivalent and produce the same general solution given in Eq.~\eqref{eqn:characteristic_system_solution}.
\end{remark}

\section{Examples of lifting phase plane symmetries}
Having established the theory regarding lifting phase plane symmetries to the time domain, we will consider two concrete example models; a linear model describing a mass-conserved system and a non-linear oscillator model. Although we established in the previous section that there is no obstruction to lifting symmetries, it is still a non-trivial process to solve the lifting condition given in Eq.~\eqref{eqn:lifting_condition}. However, the condition on the tangent $\xi(t,u,v)$ in the time direction is by construction a linear PDE making it amenable to the method of characteristics as we demonstrated in the previous section. Consequently, it is always possible to solve the lifting condition, although in general the solution will be implicit. 

\subsection{Mass-conserved linear model}
As our first example, we consider the linear model
\begin{equation}
\label{eqn:linear_model_ODE}
    \frac{\dd u}{\dd t} = -u+v \,, \quad \frac{\dd v}{\dd t} = u-v \,,
\end{equation}
which reduced to the phase plane is described by the single ODE
\begin{equation}
\label{eqn:linear_model_phase}
    \dv{v}{u}=-1 \,.
\end{equation}
The general solution to the system is given by the phase plane trajectories
\begin{equation}
\label{eqn:linear_model_solution}
    u+v=C \,, \quad C\in\mathbb{R} \,,
\end{equation}
which implies that the total mass is a constant of motion for the dynamics of the system. Solutions to the model in the phase plane and the time domain are illustrated in Fig.~\ref{fig:linear_model_solutions}.

\begin{figure}[ht!]
\begin{center}
\includegraphics[width=\textwidth]{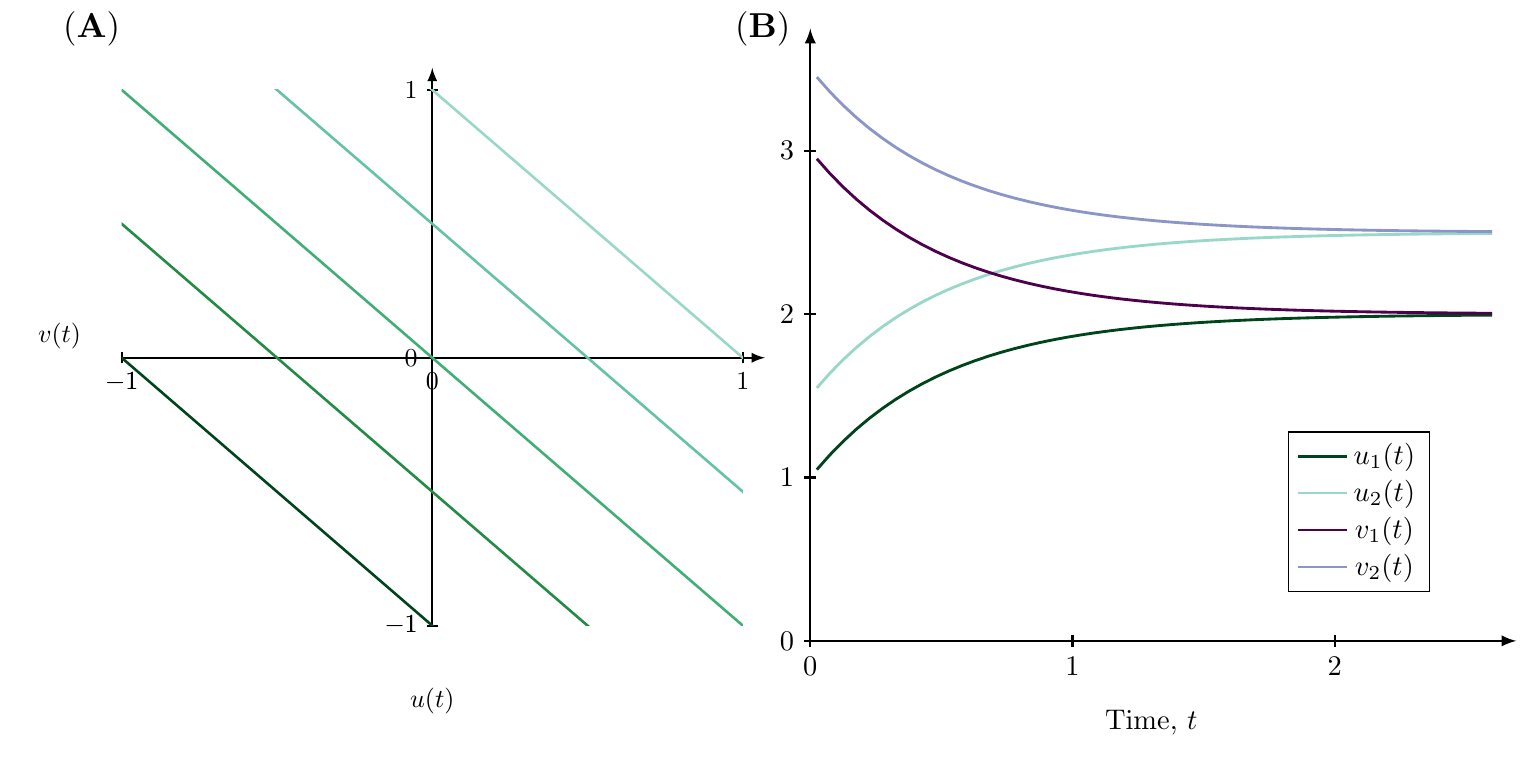}
\caption{\textit{The dynamics of the mass-conserved linear model}. Multiple solutions of the mass-conserved linear model are illustrated in (\textbf{A}) the $(u,v)$ phase plane and (\textbf{B}) the time domain.}
\label{fig:linear_model_solutions}
\end{center}
\end{figure}

The model in Eq.~\eqref{eqn:linear_model_phase} has two symmetries generated by the phase plane vector fields
\begin{equation}
\label{eqn:linear_model_scaling_Y}
    Y_S = u\partial_u+v\partial_v \,,
\end{equation}
and
\begin{equation}
\label{eqn:linear_model_genrot_Y}
    Y_G = \left(\dfrac{u+v}{u-v}\right)\left(v\partial_u-u\partial_v\right) \,.
\end{equation}
The vector field $Y_S$ in Eq.~\eqref{eqn:linear_model_scaling_Y} generates a uniform scaling and $Y_G$ in Eq.~\eqref{eqn:linear_model_genrot_Y} generates a generalised rotation. The action of the the two corresponding symmetry transformations $\Gamma_2^S$ and $\Gamma_2^G$ on the phase plane is illustrated in Fig.~\ref{fig:linear_model_symmetries_phase}.

\begin{figure}[ht!]
\begin{center}
\includegraphics[width=\textwidth]{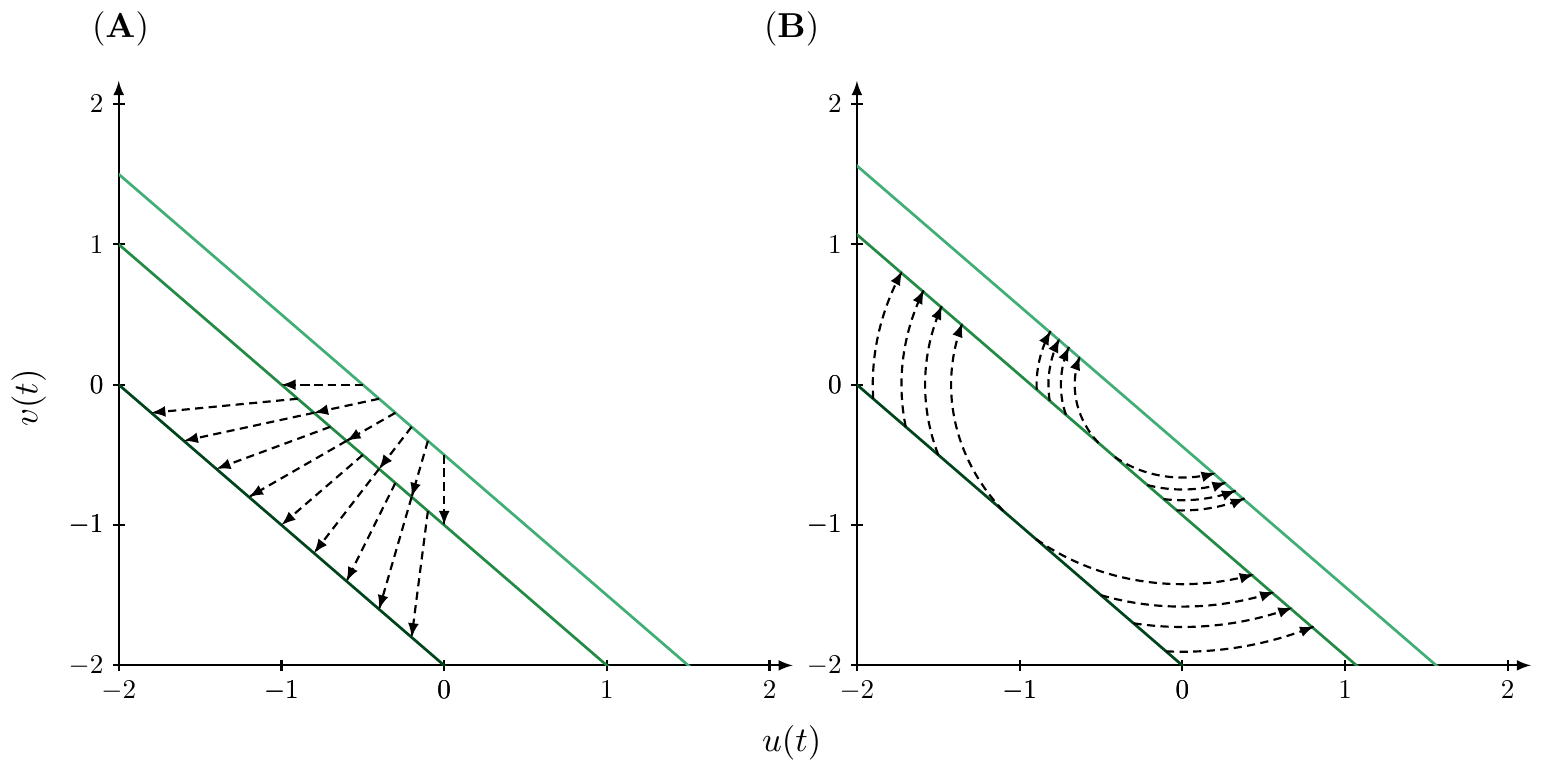}
\caption{\textit{Action of the phase plane symmetries for the mass conserved model}. Dashed arrows represent (\textbf{A}) the scaling symmetry $\Gamma_2^S$ generated by $Y_S$ and (\textbf{B}) the generalised rotation symmetry $\Gamma_2^G$ generated by $Y_G$.}
\label{fig:linear_model_symmetries_phase}
\end{center}
\end{figure}

We will now proceed to lift the generators to the time domain by solving Eq.~\eqref{eqn:lifting_condition}. Starting with the scaling symmetry generator $Y_S$ in Eq.~\eqref{eqn:linear_model_scaling_Y}, the lifting condition is
\begin{equation}
\label{eqn:lifting_cond_scaling}
    \partial_t\xi_S+(v-u)\partial_u\xi_S+(u-v)\partial_v\xi_S=0 \,.
\end{equation}
Using the method of characteristics, we obtain the following family of solutions for the infinitesimal $\xi_S(t,u,v)$
\begin{equation}
\label{eqn:xi_scaling}
    \xi_S(t,u,v) = F(u+v) \,, \quad F\in\mathcal{C}^1(\mathbb{R}) \,,
\end{equation}
parametrised by the choice of an arbitrary smooth function $F$. Consequently, the lift of Eq.~\eqref{eqn:linear_model_scaling_Y} is the family of generators of time domain symmetries given by
\begin{equation}
\label{eqn:linear_model_scaling_X}
    \hat{Y}_S = F(u+v)\partial_t+u\partial_u+v\partial_v \,, \quad F\in\mathcal{C}^1(\mathbb{R}).
\end{equation}

For the generator given in Eq.~\eqref{eqn:linear_model_genrot_Y}, we proceed in the same way to solve the lifting condition, which takes the form
\begin{equation}
\label{eqn:lifting_cond_genrot}
    \partial_t\xi_G+(v-u)\partial_u\xi_G+(u-v)\partial_v\xi_G = -2\left(\frac{u+v}{u-v}\right)^2 \,,
\end{equation}
for the time infinitesimal $\xi_G(t,u,v)$ using the method of characteristics. The solution is \begin{equation}
\label{eqn:xi_genrot}
    \xi_G{\left(t,u,v \right)} = -\frac{1}{2}\left(\frac{u+v}{u-v}\right)^2 +F{\left(u + v \right)}\,, \quad F\in\mathcal{C}^1(\mathbb{R}) \,,
\end{equation}
and the corresponding lift of the generator in Eq.~\eqref{eqn:linear_model_genrot_Y} to the time domain is the family of generators given by
\begin{equation}
\label{eqn:linear_model_genrot_X}
    \hat{Y}_G = \left[ -\frac{1}{2}\left(\frac{u+v}{u-v}\right)^2 +F{\left(u+v \right)}\right]\partial_t + \left(\frac{u+v}{u-v}\right)v\partial_u - \left(\frac{u+v}{u-v}\right)u\partial_v \,, \quad F\in\mathcal{C}^1(\mathbb{R}) \,.
\end{equation}

The transformations $\Gamma_3^S$ and $\Gamma_3^G$ in the time domain, generated by $\hat{Y}_S$ and $\hat{Y}_G$, respectively, are obtained as the integral curves using the exponential map $\exp\mathrm{(}\varepsilon \hat{Y}\mathrm{)}$, where $\varepsilon$ parameterises the integral curve, or equivalently the corresponding 1-dimensional Lie group of transformations. The action on solutions to the system in Eq.~\eqref{eqn:linear_model_ODE} of the transformations is illustrated in Fig.~\ref{fig:linear_model_symmetries_ODE} for different choices of the arbitrary function $F$ in Eqs.~\eqref{eqn:linear_model_scaling_X} and \eqref{eqn:linear_model_genrot_X}.

\begin{figure}[ht!]
\begin{center}
\includegraphics[width=\textwidth]{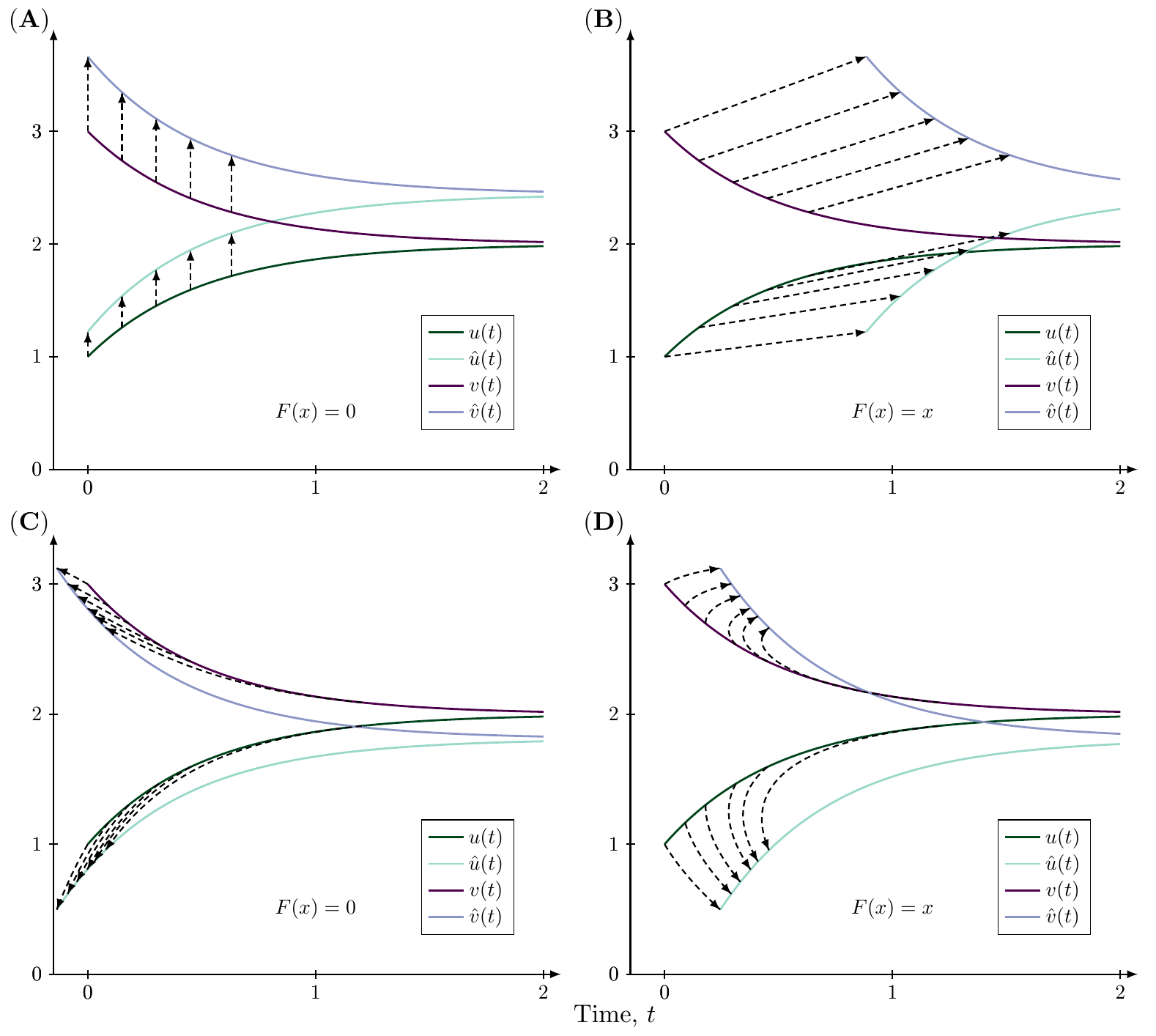}
\caption{\textit{Action of lifted phase plane symmetries for the mass-conserved linear model.} Top row: Original solution curves $\left(u(t),v(t)\right)$ and transformed solution curves $\left( \hat{u}(t),\hat{v}(t) \right) = \Gamma_3^S\left(u(t),v(t)\right)$ generated by $\hat{Y}_S$ in Eq.~\eqref{eqn:linear_model_scaling_X} for $F(x)=0$ (\textbf{A}) and $F(x)=x$ (\textbf{B}). Bottom row: Original solution curves $\left(u(t),v(t)\right)$ and transformed solution curves $\left( \hat{u}(t),\hat{v}(t) \right) = \Gamma_3^G\left(u(t),v(t)\right)$ generated by $\hat{Y}_G$ in Eq.~\eqref{eqn:linear_model_genrot_X} for $F(x)=0$ (\textbf{C}) and $F(x)=x$ (\textbf{D}). Dashed arrows represent the transformations $\Gamma_3^S$ and $\Gamma_3^G$, respectively.} 
\label{fig:linear_model_symmetries_ODE}
\end{center}
\end{figure}

\subsection{A non-linear oscillator model}
Next, we consider the following non-linear system of ODEs in the time domain
\begin{equation}
\label{eqn:nonlinear_model_ODE}
    \frac{\dd u}{\dd t} = u-v-u^3-uv^2 \,, \quad \frac{\dd v}{\dd t} = u+v-v^3-u^2v \,.
\end{equation}
The corresponding phase plane ODE is given by
\begin{equation}
\label{eqn:nonlinear_model_phase}
    \frac{\dd v}{\dd u}=\frac{u+v-v^3-u^2v}{u-v-u^3-uv^2} \,,
\end{equation}
which has a rotation symmetry generated by the phase plane vector field~\cite{hydon2000symmetry}
\begin{equation}
\label{eqn:nonlinear_model_rotation_Y}
    Y_R=-v\partial_u+u\partial_v \,.
\end{equation}
The dynamics of the model in Eq.~\eqref{eqn:nonlinear_model_ODE} is illustrated in Fig.~\ref{fig:nonlinear_model_solutions} and the action on the phase plane generated by $Y_R$ is illustrated in Fig.~\ref{fig:nonlinear_model_symmetries_phase}.

\begin{figure}[ht!]
\begin{center}
\includegraphics[width=\textwidth]{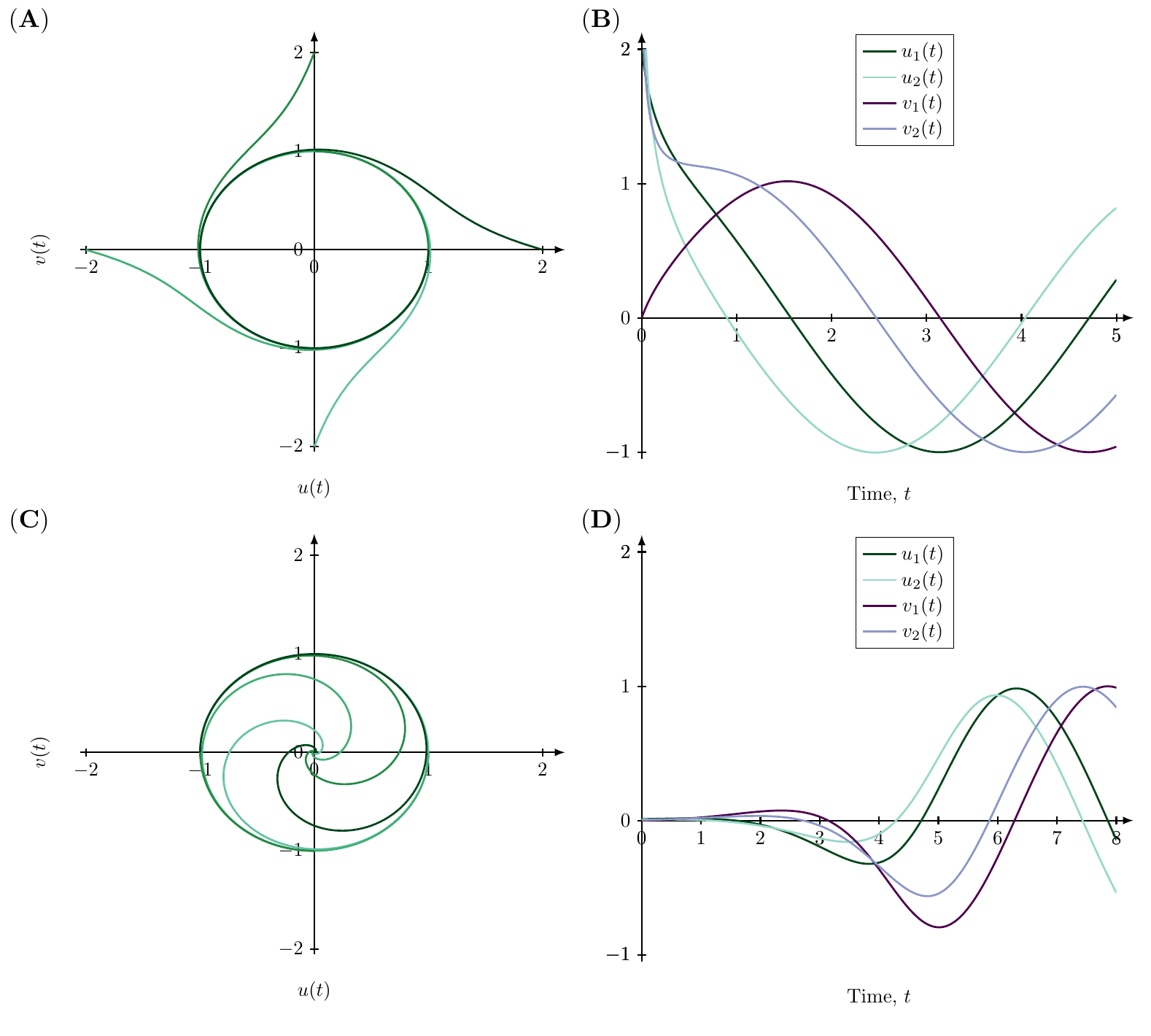}
\caption{\textit{The dynamics of the non-linear oscillator model}. Multiple solutions of the non-linear oscillator model are illustrated for $r>1$ in (\textbf{A}) the $(u,v)$ phase plane and (\textbf{B}) the time domain, and for $r<1$ in (\textbf{C}) the $(u,v)$ phase plane and (\textbf{D}) the time domain.}
\label{fig:nonlinear_model_solutions}
\end{center}
\end{figure}

\begin{figure}[ht!]
\begin{center}
\includegraphics[width=\textwidth]{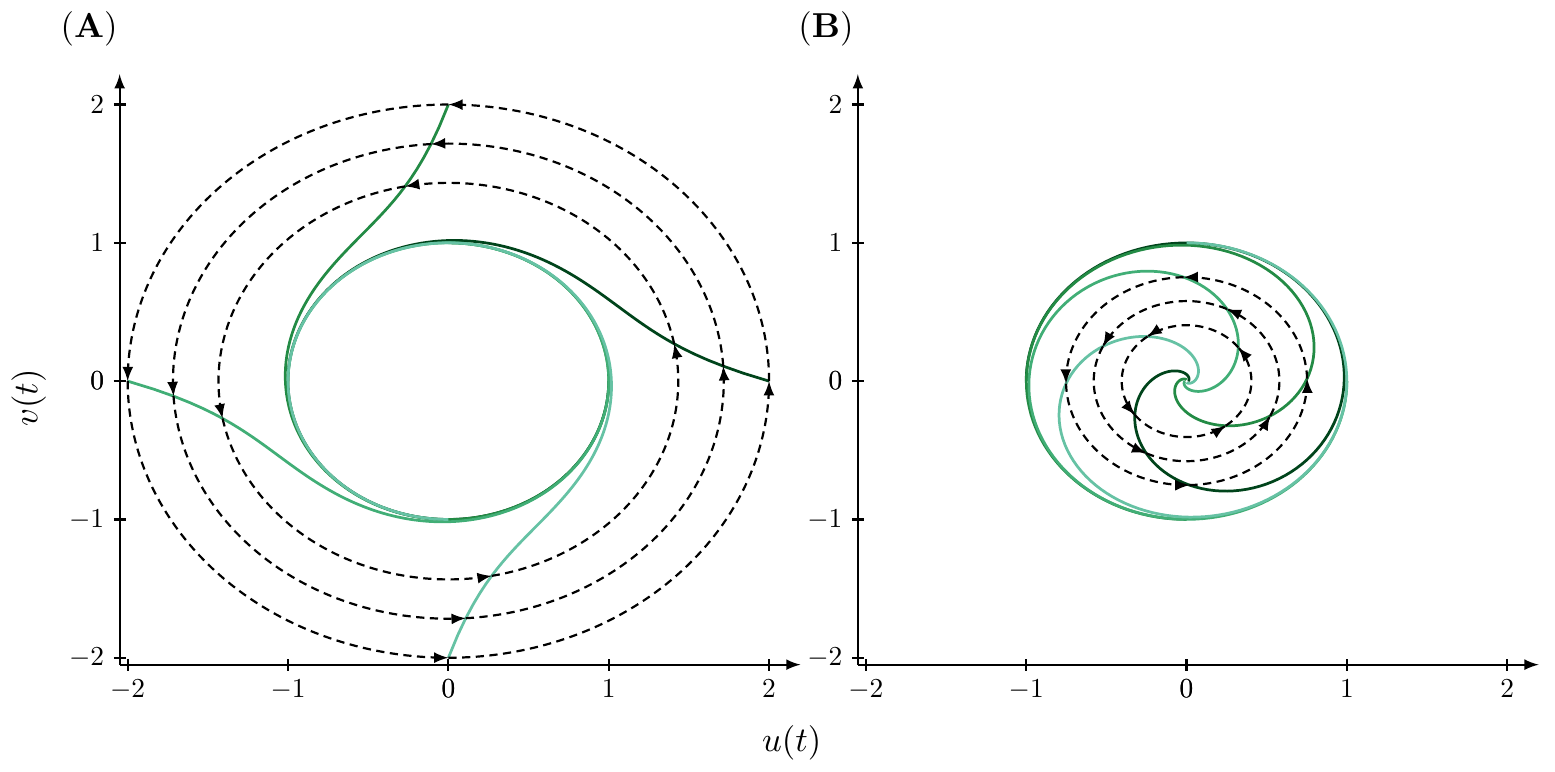}
\caption{\textit{Action of the phase plane symmetry for the non-linear oscillator model}. Dashed arrows represent the rotation symmetry $\Gamma_2^R$ acting on oscillating trajectories for (\textbf{A}) $r>1$ and (\textbf{B}) $r<1$.}
\label{fig:nonlinear_model_symmetries_phase}
\end{center}
\end{figure}

The lift of the phase plane symmetry generator $Y_R$ in Eq.~\eqref{eqn:nonlinear_model_rotation_Y}, is facilitated by a transition to polar coordinates $(r,\theta)$ defined by
\begin{equation}
\label{eqn:polar_coord}
    u=r\cos(\theta)\,, \quad v=r\sin(\theta)\,.
\end{equation}
In these coordinates, the system in the time domain takes the form
\begin{equation}
\label{eqn:nonlinear_model_ODE_polar}
    \frac{\dd \theta}{\dd t} = 1 \,, \quad \frac{\dd r}{\dd t} = r(1-r^2) \,,
\end{equation}
the corresponding phase plane ODE is
\begin{equation}
\label{eqn:nonlinear_model_phase_polar}
    \frac{\dd r}{\dd \theta}=r(1-r^2) \,,
\end{equation}
and the symmetry generator is simply
\begin{equation}
\label{eqn:nonlinear_model_rotation_Y_polar}
    Y_R=\partial_{\theta} \,.
\end{equation}

The lifting condition for the phase plane symmetry generator $Y_R$ in Eq.~\eqref{eqn:nonlinear_model_rotation_Y_polar} and time domain system in Eq.~\eqref{eqn:nonlinear_model_ODE_polar} is given by
\begin{equation}
\label{eqn:lifting_cond_rotation}
    \partial_t\xi_R + \partial_{\theta}\xi_R + r \left(1-r^2\right) \partial_r \xi_R = 0 \,,
\end{equation}
and solving for the unknown time infinitesimal $\xi_R(t,r,\theta)$ using the method of characteristics we obtain 
\begin{equation}
\label{eqn:xi_rotation_polar}
    \xi_R(t,r,\theta) = F\left(\ln\left(\frac{r}{\sqrt{|1-r^2|}}\right)-\theta\right) \,, \quad F\in\mathcal{C}^1(\mathbb{R}) \,.
\end{equation}
Consequently, the lifted infinitesimal generator in the time domain is given by
\begin{equation}
\label{eqn:nonlinear_model_rotation_X_polar}
    \hat{Y}_R = F\left(\ln\left(\frac{r}{\sqrt{|1-r^2|}}\right)-\theta\right)\partial_t + \partial_\theta \,, \quad F\in\mathcal{C}^1(\mathbb{R}) \,,
\end{equation}
or, reverting back to the original states $u$ and $v$,
\begin{equation}
\label{eqn:nonlinear_model_rotation_X}
    \hat{Y}_R=F\left(\ln\left(\sqrt{\frac{u^2+v^2}{|1-\left(u^2+v^2\right)|}}\right)-\tan^{-1}\left(\frac{v}{u}\right)\right)\partial_t-v\partial_u+u\partial_v \,, \quad F\in\mathcal{C}^1(\mathbb{R})\,.
\end{equation}

As in the previous example, the transformation $\Gamma_3^R$ in the time domain generated by $\hat{Y}_R$ is given by the exponential map $\exp \mathrm{(} \varepsilon \hat{Y}_R \mathrm{)}$. The action on solutions to the system in Eq.~\eqref{eqn:nonlinear_model_ODE} of $\Gamma_3^R$ is illustrated in Fig.~\ref{fig:nonlinear_model_symmetries_ODE} for different choices of the arbitrary function $F$ in Eq.~\eqref{eqn:nonlinear_model_rotation_X}.

\begin{figure}[ht!]
\begin{center}
\includegraphics[width=\textwidth]{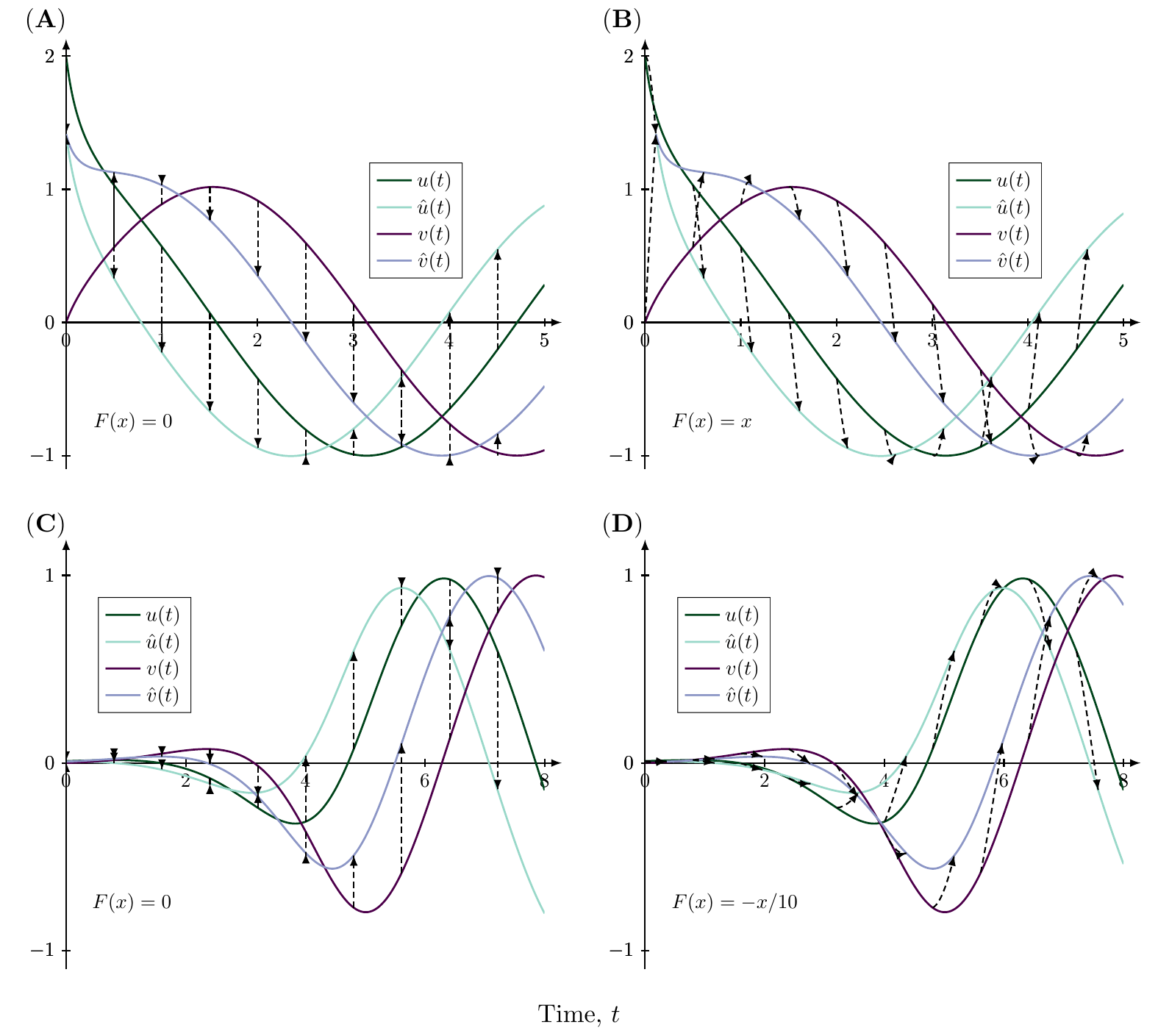}
\caption{\textit{Action of lifted phase plane symmetries for the non-linear oscillator model .} Original solution curves $\left(u(t),v(t)\right)$ and transformed solution curves $\left( \hat{u}(t),\hat{v}(t) \right) = \Gamma_3^R\left(u(t),v(t)\right)$ generated by $\hat{Y}_R$ in Eq.~\eqref{eqn:nonlinear_model_rotation_X} for (\textbf{A}) $F(x)=0$ and $r>1$, (\textbf{B}) $F(x)=x$ and $r>1$, (\textbf{C}) $F(x)=0$ and $r<1$, (\textbf{D}) $F(x)=-x/10$ and $r<1$. Dashed arrows represent the transformation $\Gamma_3^R$.}
\label{fig:nonlinear_model_symmetries_ODE}
\end{center}
\end{figure}

We end this example by considering the family of solutions to the system in Eq.~\eqref{eqn:nonlinear_model_ODE} obtained by continuously varying the transformation parameter $\varepsilon$ for $\Gamma_3^R$. Any 1-parameter group of Lie symmetries can be expressed as a translation in some appropriate coordinate system of canonical coordinates. In the case of Eq.~\eqref{eqn:nonlinear_model_phase} the canonical coordinates in the phase plane are the polar coordinates $(\theta,r)$ considered above, where $\hat{Y}_R$ acts like a translation in the angular coordinate while leaving the radial coordinate invariant as illustrated in Fig.~\ref{fig:nonlinear_model_symmetries_phase}. In the time domain, however, the change in state space is also accompanied by a non-trivial transformation in the time direction, resulting in the qualitative behaviour illustrated in Fig.~\ref{fig:nonlinear_model_solutions_family}.

\begin{figure}[ht!]
\begin{center}
\includegraphics[width=\textwidth]{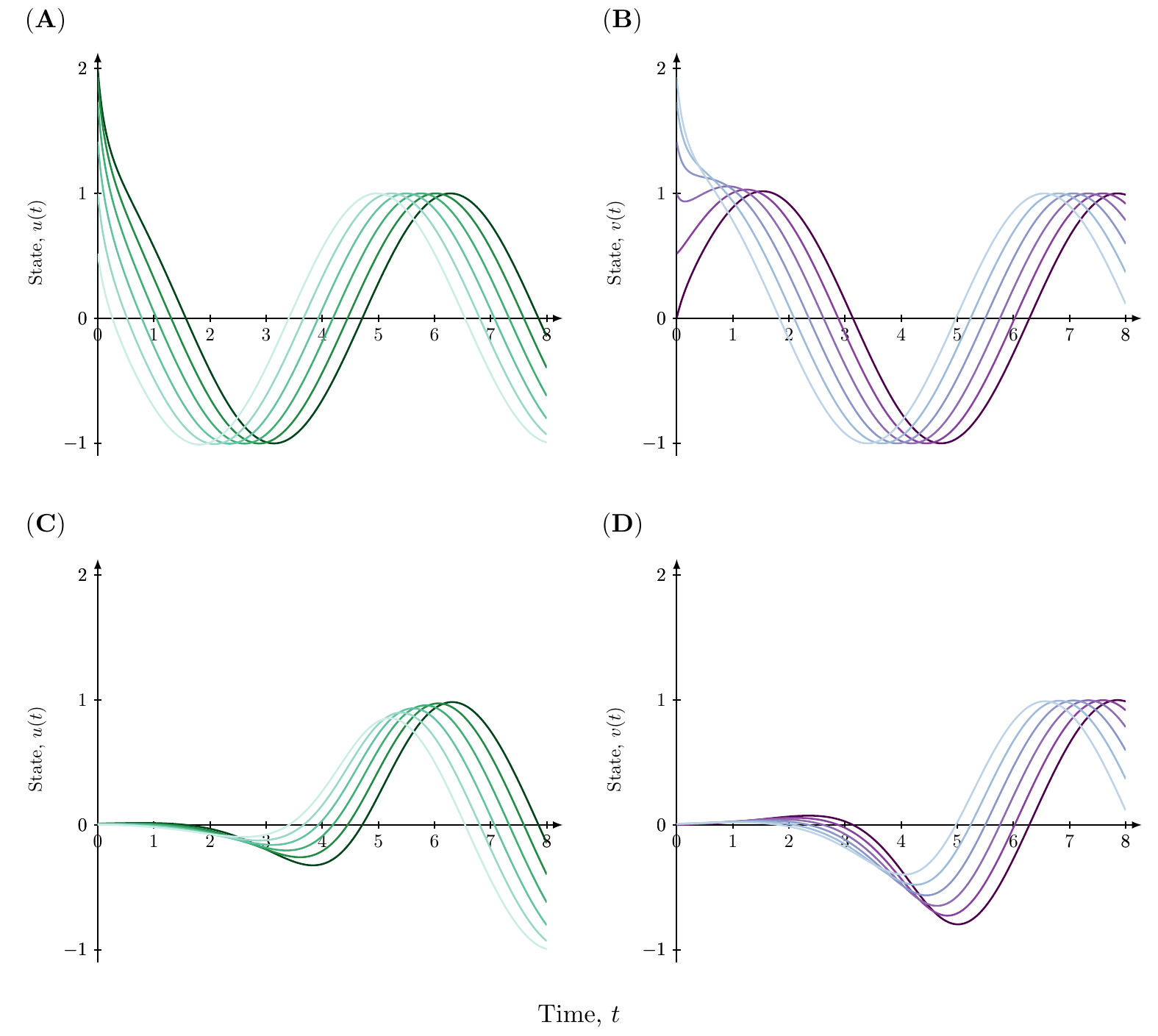}
\caption{\textit{Family of lifted solutions to the non-linear model}. The family of solutions is generated by repeatedly transforming solutions with $F(x)=0$ in $\xi_R$ given in Eq.~\eqref{eqn:xi_rotation_polar}. Transformations of solutions are illustrated for $r>1$ in (\textbf{A}) $u(t)$ and (\textbf{B}) $v(t)$, and for $r<1$ in (\textbf{C}) $u(t)$ and (\textbf{D}) $v(t)$.}
\label{fig:nonlinear_model_solutions_family}
\end{center}
\end{figure}
\section{Discussion}
In this paper, we show that for autonomous two-state dynamical systems, symmetries in the time domain induce symmetries in the phase plane through the reduction given in Eq.~\eqref{eqn:reduction_map}. Conversely, we show that it is possible to lift phase plane symmetries to symmetries in the time domain, which is less obvious because it involves showing that there is no obstruction to simultaneously satisfying both symmetry conditions in Eq.~\eqref{eqn:sym_con_ODE} in the time domain. The lift amounts to solving the \textit{lifting condition} in Eq.~\eqref{eqn:lifting_condition} which is a linear PDE for the tangent $\xi(t,u,v)$ in the time direction, and consequently the method of characteristics is directly applicable. We also show that symmetries in the phase plane, and their connections to symmetries in the time domain, are independent of the arbitrary choice of independent variable in the reduction to the $(u,v)$-plane. Our results constitute a novel and important contribution to the understanding of the relationship between symmetries in two complementary descriptions of dynamical systems, and in particular entail that any symmetry-based analysis in the phase plane can be understood in the time domain through the lifting condition. 

A symmetry of a phase plane ODE that is common to different time-dependent ODEs can be lifted to distinct symmetries in the time domain, provided the additional information regarding the time domain dynamics given by $\omega_u(u,v)$ and $\omega_v(u,v)$. This is most clearly illustrated when studying the mass-conserved linear model in Eq.~\eqref{eqn:linear_model_ODE}. The phase plane ODE of any autonomous mass-conserved system in the time domain, given by $\dot{u}=-\dot{v} =\omega(u,v)$ for an arbitrary function $\omega$, is $\dd v/\dd u=-1$ as in Eq.~\eqref{eqn:linear_model_phase}. While this phase plane ODE is common to all mass conserved systems, the lifting condition in Eq.~\eqref{eqn:lifting_condition} allows us to lift the two phase plane symmetries generated by $Y_S$ in Eq.~\eqref{eqn:linear_model_scaling_Y} and $Y_G$ in Eq.~\eqref{eqn:linear_model_genrot_Y} to generators $\hat{Y}_S$ and $\hat{Y}_R$, respectively, of symmetries in the time domain, since it accounts for the choice of the function $\omega(u,v)$.

It deserves to be emphasised that autonomy of the system can itself be described in terms of symmetries. The fact that Eq.~\eqref{eqn:system_ODE} contains no explicit time dependence amounts to symmetry under time translations generated by $X = \partial_t$. The induced vector field is the trivial one $f_*X = 0$, which leaves $M_2$ and therefore $J_3$ invariant. Conversely, the generator of time translations is always a solution to the lifting condition $\left. \left( D_t \xi \right) \right|_{\Delta} = 0$ for the identity transformation in phase space.

In addition, we noted above that the constant of integration appearing in the solution to the characteristic system in Eq.~\eqref{eqn:characteristic_system_lifting} corresponding to the lifting condition can be interpreted as generating an arbitrary constant time translation following the lift. This implies that all the non-trivial information required to lift the phase plane symmetry generator $Y$ to a generator of symmetries in the time domain is contained in the non-homogeneous term in Eq.~\eqref{eqn:characteristic_system_solution}, a fact which is also illustrated in Fig.~\ref{fig:linear_model_symmetries_ODE} and Fig.~\ref{fig:nonlinear_model_symmetries_ODE}.

While our treatment in this paper is limited to two-state systems, it would be very interesting to explore a generalisation to higher dimensions. The dynamics of autonomous systems in more than two states is also frequently analysed in phase space, and the same motivation we have offered above for understanding the connection between symmetries in different formulations applies. Generalising our analysis in a different direction, it is natural to consider the case of higher-dimensional symmetry groups, i.e., generated by a set of more than one vector field. The generators form a Lie algebra whose structure in terms of commutation relations determine the properties of the group of transformations. Exploring how such algebraic properties in the time domain and in the phase plane are related would provide insight into the correspondence between model structures in the two representations.  
\section{Acknowledgements}
FO would like to thank the Wolfson Centre for Mathematical Biology for hospitality and the Kempe Foundation for financial support during the conception of this work. JGB would like to thank the Wenner--Gren Foundation for a Research Fellowship and Linacre College, Oxford, for a Junior Research Fellowship. 

\section{Author contributions}
All three authors conceptualised the work, analysed the results and wrote the paper. FO formulated and proved the three theorems. JGB designed the examples and made the figures. 

\clearpage
\bibliographystyle{unsrt}
\bibliography{symmetry_bib}

\end{document}